\newcommand{\R}{\ensuremath{\mathbb{R}}}
\newcommand{\Q}{\ensuremath{\mathbb{Q}}}
\newcommand{\LL}{\ensuremath{\mathcal{L}}}
\newcommand{\delim}[3]{\left#1 #3 \right#2}
\newcommand{\pin}[1]{\delim{\langle}{\rangle}{#1}}
\def \ds {\displaystyle}
\def \tn {\textnormal}
\newtheorem{theorem}{Theorem}[section]
\newtheorem{lemma}[theorem]{Lemma}
\newtheorem{remark}[theorem]{Remark}
\numberwithin{equation}{section}
\title[Large Diffusivity]{Large Diffusivity and Rate of Convergence of Attractors in Parabolic Systems}
\author[L. Pires]{Leonardo Pires}
\address[L. Pires]{Departamento  de Matem\'atica e Estat\'istica,
Universidade Estadual de Ponta Grossa, 84030-900 Ponta Grossa PR, Brazil}
\email{lpires@uepg.br}
\begin{document}

\begin{abstract}
In this paper we are concerned with rate of convergence of parabolic systems with large diffusion. We will exhibit the exact moment that spatial homogenization occurs and estimate the continuity of attractors by a  rate of convergence. We will show an example where our estimate is optimal.
\end{abstract}

\maketitle
\noindent\textbf{2010 MSC:} Primary 34Bxx, 35B25, 35B40, 35B45.
\par \noindent\textbf{Keywords:} global attractors, parabolic systems, large diffusion, rate of convergence of attractors.
\thispagestyle{empty}

\section{Introduction}
Many reaction diffusion equations originated by models of heat diffusion does not display the creation of stable patterns, that is, stable solutions which are not spatially dependent in an adequate limit process involving parameters. Diffusive processes where solutions have this spatial homogenization  have been studied in the works \cite{Conley1978} and \cite{Hale1986}.

Here we consider a system of parabolic equation with large diffusion in all domain which the limiting problem is an ordinary differential equation in $\R^n$. More precisely we will impose conditions in the limiting ODE system in order to ensure that the PDE has a global attractor converging to the limiting atractor with a precise rate of convergence. Hence in this paper we generalize some results obtained in the above works presenting a estimate of how fast can be the spatial homogeneity and exhibiting  the exact moment that this phenomenon occurs.

To state our results let $\Omega$ be a bounded open set in $\R^N$, $N\leq 3$, with boundary $\Gamma=\partial \Omega$ smooth and consider the system of reaction-diffusion equations of the form
\begin{equation}\label{reaction_diffusion_equation}
\begin{cases}
u^\varepsilon_t-E\Delta u^\varepsilon+u^\varepsilon=F(u^\varepsilon),&\,\,t>0,\,\,\,x\in\Omega,\\ \frac{\partial u^\varepsilon}{\partial\vec{n}}=0, &\,\,t>0,\,\,\,x\in\Gamma,
\end{cases}
\end{equation} 
where $u^\varepsilon=(u^\varepsilon_1,...,u^\varepsilon_n)\in\R^n$, $E=\tn{diag}(\varepsilon_1,...,\varepsilon_n)$, with $\varepsilon_i\geq m_0>0$, $i=1,...,n$,  $\vec{n}$ is the outward normal vector to the $\Gamma$ and $\frac{\partial u^\varepsilon}{\partial\vec{n}}=(\pin{\nabla u^\varepsilon_1,\vec{n}},\dots,\pin{\nabla u^\varepsilon_n,\vec{n}})$. We assume that the nonlinearity $F:\R^n\to \R^n$ is bounded continuously differentiable and satisfies other hypotheses stated later.  

We will see when $d_\varepsilon:=\ds\min_{i=1,...,n}{\varepsilon_i}\to \infty$  the solutions of \eqref{reaction_diffusion_equation}  converge to a solution of the following ordinary differential equation
\begin{equation}\label{limiting_ODE_intro}
\dot{u}^\infty(t)+u^\infty(t)=F(u^\infty(t)),
\end{equation}   
where $u^\infty(t)\in \mathbb{R}^n$ and by simplicity we have assumed $|\Omega|=1$.

Under standard conditions the equations \eqref{reaction_diffusion_equation} and \eqref{limiting_ODE_intro} are globally well posed in a Hilbert space $X_\varepsilon^\frac{1}{2}$ and $\mathbb{R}^n$ respectively. Moreover the nonlinear semigroup generated by its solutions have a global attractor $\mathcal{A}_\varepsilon\subset X_\varepsilon^\frac{1}{2}$ and $\mathcal{A}_\infty\subset\mathbb{R}^n$. If we consider $\mathbb{R}^n$ embedding in $X_\varepsilon^\frac{1}{2}$ as the constant functions, the main result of this paper states
\begin{equation}\label{Hausdorff_A}
\tn{d}_H(\mathcal{A}_\varepsilon,\mathcal{A}_\infty)\leq  \frac{C}{\sqrt{d_\varepsilon}},
\end{equation}
for $d_\varepsilon\ $ in a appropriate bounded interval, $\tn{d}_H$ denotes the Hausdorff distance between sets in $X_\varepsilon^\frac{1}{2}$ and $C$ denotes a constant independent of $d_\varepsilon$. It was showed in \cite{Hale1986}, for $d_\varepsilon$ sufficienty large we have $\mathcal{A}_\varepsilon=\mathcal{A}_\infty$. We will calculate the upper limit value $\mu$ when this fact begins to occur. Therefore \eqref{Hausdorff_A} ensure the continuity of the family $\{\mathcal{A}_\varepsilon\}_\varepsilon$ as $\varepsilon\to \mu^-$. Basically the $\omega-$limit set set of every solutions of \eqref{reaction_diffusion_equation} lie in a bounded set of $\mathbb{R}^n$ and it must be a union of invariant sets of \eqref{limiting_ODE_intro} which belongs to this bounded set. But such invariant sets must belong to $\mathcal{A}_\infty$ and it is clear that for $d_\varepsilon$ sufficiently large this bounded set will be $\mathcal{A}_\varepsilon$.

But we go further, we will show the existence of an invariant manifold $\mathcal{M}_\varepsilon$ for \eqref{reaction_diffusion_equation} containing the global attractor $\mathcal{A}_\varepsilon$. Notice that trivially $\mathbb{R}^n$ is a invariant manifold for \eqref{limiting_ODE_intro} containing $\mathcal{A}_\infty$. We will show that, in some sense, $\mathcal{M}_\varepsilon$ approaches $\mathbb{R}^n$ with the same rate $1/\sqrt{d_\varepsilon}$.

The values of $\lambda$ such that 
\begin{equation}
\begin{cases}
E\Delta u^\varepsilon+u^\varepsilon=\lambda u^\varepsilon,&x\in\Omega,\\ \frac{\partial u^\varepsilon}{\partial\vec{n}}=0, &x\in\Gamma,
\end{cases}
\end{equation}
has a non zero solutions are called eigenvalues. We will see that they are real numbers and can be ordered  in the following way $\{1<\lambda_2^\varepsilon<\lambda_3^\varepsilon,\dots\}$. Moreover  we have  $\lambda_j^\varepsilon\to \infty$ as $d_\varepsilon\to \infty$ for $j\geq 2$. Thus we can consider the spectral projection $Q_\varepsilon$ whose the image can be identified with $\mathbb{R}^n$. We will prove that 
$$
\|Q_\varepsilon-P\|_{\LL(L^2(\Omega,\mathbb{R}^n),X_\varepsilon^\frac{1}{2})}\leq \frac{C}{\sqrt{d_\varepsilon}},
$$
where $C$ is a constant independent of $d_\varepsilon$ and $P$ denotes the average projection on $\Omega$.

An interesting question arises when we ask if the exponent $-1/2$ in the above estimate is optimal. We will exhibit an example where the convergence of resolvent operators in exact $Cd_\varepsilon^{-\frac{1}{2}}$. It is well known and we will see in this work that this convergence of resolvent operator will imply the convergence of global attractors and invariant manifolds.

This paper is divided as follows: in Section 2 we present the functional phase space to deal with \eqref{reaction_diffusion_equation} and \eqref{limiting_ODE_intro} and we state conditions to ensure the existence of global attractors. In Section 3 we make precise in what sense the spatial homogenization occurs. In Section 4 we deal with the spectral convergence and we obtain the rate of convergence for the resolvent operators. In Section 4 we prove the main result of this work concerning rate of convergence of attractors.

\section{Functional Setting}
The phase space to deal with system of reaction diffusion equation as \eqref{reaction_diffusion_equation} is generally the Sobolev space $H^1(\Omega,\R^n)$, but since we have the diffusion coefficient as the parameter $\varepsilon_i$ is natural to consider a metric with some weight and use the fractional power spaces associated with sectorial operators. They play a key role in the theory of the existence of solutions to nonlinear partial differential equations of the parabolic type and in the analysis of asymptotic behavior of its solutions. 

Consider the operator $A_\varepsilon=\tn{diag}(A_1,...,A_n)$, where $A_i:D(A_i)\subset L^2(\Omega)\to L^2(\Omega)$, $i=1,...,n$, is given by 
\begin{equation}\label{operator_reaction_diffusion}
\begin{cases}
D(A_i)=\{\varphi \in H^1(\Omega):\frac{\partial \varphi}{\partial\vec{n}}=0,\tn{ in }\Gamma\},\\A_i\varphi=-\varepsilon_i\Delta \varphi+\varphi.
\end{cases}
\end{equation}

Let $A_i^\alpha$ be the fractional power of operator $A_i$ and denote $X_i^\alpha$  its fractional power space  endowed with the graph norm. If $N\leq 3$ and  $\frac{3}{4}<\alpha<1$, according to \cite{Henry1980}, we have $X^\alpha \hookrightarrow H^1(\Omega,\mathbb{R}^n)\cap L^\infty(\Omega,\mathbb{R}^n)$ with continuous inclusion and
\begin{equation}\label{domain_reaction_diffusion_times}
D(A_\varepsilon^\frac{1}{2})= X_1^\frac{1}{2} \times\dots\times X_n^\frac{1}{2}=H^1(\Omega,\R^n).
\end{equation}
Thus we take as phase space for \eqref{reaction_diffusion_equation} the space $X_\varepsilon^\frac{1}{2}=H^1(\Omega,\R^n)$ with the inner product given by
\begin{equation}\label{pin_reaction_diffusion}
\pin{\varphi,\psi}_{X_\varepsilon^\frac{1}{2}}=\int_\Omega E\nabla\varphi\nabla\psi\,dx+\int_\Omega \varphi\psi\,ds,\quad \varphi,\psi\in X_\varepsilon^\frac{1}{2},
\end{equation}
and we rewrite \eqref{reaction_diffusion_equation} in the abstract  form
\begin{equation}\label{coupled_system_reaction_diffusion}
\begin{cases}
u^\varepsilon_t+A_\varepsilon u^\varepsilon=f(u^\varepsilon),\,\,t>0,\\ u^\varepsilon(0)=u^\varepsilon_0\in X_\varepsilon^\frac{1}{2},
\end{cases}
\end{equation}
where $f:X_\varepsilon^\frac{1}{2}\to L^2(\Omega,\R^n)$ is given by $f(u)(x)=F(u(x))$, for $u\in X_\varepsilon^\frac{1}{2}$.

To obtain the well posedness  and  existence of the global attractor for  equation \eqref{coupled_system_reaction_diffusion}, we need to impose some growth and dissipativeness conditions, these conditions are statement in \cite{J.M.Arrieta1999,J.M.Arrieta2000} and \cite{A.N.Carvalho2010}. 
\begin{itemize}
\item[(i)] \textbf{Growth condition}. If $n=2$, for all $\eta>0$, there is a constant $C_\eta>0$ such that
$$
 |f(u)-f(v)|\leq C_\eta (e^{\eta|u|^2}+e^{\eta|v|^2})|u-v|,\quad \forall\,u,v\in \R,
$$     
and if $n\geq 3$, there is a constant $\tilde{C}>0$ such that
$$
|f(u)-f(v)|\leq \tilde{C}|u-v| (|u|^{\frac{4}{n-2}}+|v|^{\frac{4}{n-2}}+1 ),\quad \forall\,u,v\in \R.
$$     
\item[(ii)]\textbf{Dissipativeness condition} 
$$
\limsup_{|u|\to\infty} \dfrac{f(u)}{u} <0.
$$
\end{itemize}

The theory of well-posedness of abstract parabolic problems that enable us to study \eqref{coupled_system_reaction_diffusion} is developed in \cite{A.N.Carvalho2010}. Results in local well-posedness in the energy space $X_\varepsilon^\frac{1}{2}$ are obtained due the fact that $A_\varepsilon$ generates a strong continuous semigroup and in addition $f$ is continuously differentiable satisfying the above growth condition (i). To show that all solutions of \eqref{coupled_system_reaction_diffusion} are globally defined, we need to impose the above  dissipativeness condition (ii). Thus, for each initial date $u_0^\varepsilon$ in $X_\varepsilon^\frac{1}{2}$, the equation \eqref{coupled_system_reaction_diffusion} has a global solution through $u_0^\varepsilon$. This solution is continuously differentiable with respect to the initial data and it is a classical solution for $t>0$ satisfying the variation of constants formula. Moreover \eqref{coupled_system_reaction_diffusion} has a global attractor uniformly bounded in $\varepsilon_i$, $i=1,...,n$.

Thus we assume the existence of the solutions $u^\varepsilon(t,u^\varepsilon_0)$ of \eqref{coupled_system_reaction_diffusion} through $u^\varepsilon_0\in X_\varepsilon^\frac{1}{2}$ for positive time and the nonlinear semigroup defined by $T_\varepsilon(t)u_0^\varepsilon=u^\varepsilon(t,u^\varepsilon_0)$ satisfies the variation of constants formula
\begin{equation}\label{vcf1_rd}
T_\varepsilon(t)u_0^\varepsilon=e^{-A_\varepsilon t}u_0^\varepsilon+\int_0^te^{-A_\varepsilon (t-s)}f(T_\varepsilon(s)u_0^\varepsilon)\,ds,\,\,t>0,
\end{equation}
and has a global attractor $\mathcal{A}_\varepsilon\subset X_\varepsilon^\frac{1}{2}$ such that
$$
\sup_{u\in\mathcal{A}_\varepsilon}\|u\|_{L^\infty(\Omega,\R^n)}\leq K,
$$  
for some constant $K$ independent of $\varepsilon$ (see \cite{J.M.Arrieta2000}).  Here $e^{-A_\varepsilon t}$ is the strongly linear semigroup whose infinitesimal generator is $-A_\varepsilon$.

The equation \eqref{limiting_ODE_intro} is well posed problem in $\mathbb{R}^n$ since $F$ is continuous with Lipschitz continuous first derivative. Moreover if we assume that $F$ satisfy the above dissipativeness conditions (ii) then \eqref{limiting_ODE_intro} has solutions defined for all time and  a global attractor $\mathcal{A}_\infty\subset \mathbb{R}^n$.

\section{Asymptotic Behavior}
Once the problem is  well posed in the energy space $X_\varepsilon^\frac{1}{2}$, we prove that in fact the ordinary differential equation \eqref{limiting_ODE_intro}  will describe the asymptotic behaviour of \eqref{reaction_diffusion_equation}. For this we take $\delta>0$ sufficiently small and define the spectral projection $Q_\varepsilon:L^2(\Omega,\mathbb{R}^n)\to L^2(\Omega,\mathbb{R}^n)$,  given by
\begin{equation}\label{projection_reaction_diffusion}
Q_\varepsilon=\frac{1}{2\pi i}\int_{|\xi+1|=\delta}(\xi+A_\varepsilon)^{-1}\,d\xi.
\end{equation} 
Thus the eigenspace $Q_\varepsilon X_\varepsilon^\frac{1}{2}$ is isomorphic to $\R^n$. In fact, the operator $A_\varepsilon$ has compact resolvent and $1\in\sigma(A_\varepsilon)$ is its first eigenvalue, thus $Q_\varepsilon$ is well defined projection with finit rank since $Q_\varepsilon X_\varepsilon^\frac{1}{2}=\tn{span}[\varphi_1^\varepsilon]$, where $\varphi_1^\varepsilon$ is the first eigenfunction of $A_\varepsilon$.

With the aid of the projection $Q_\varepsilon$ we can decompose the phase space $X_\varepsilon^\frac{1}{2}$ in a finite-dimensional subspace and its complement. This decomposition will allow us to decompose the operator $A_\varepsilon$ in order to obtain estimates for the linear semigroup $e^{-A_\varepsilon t}$ restricted to these spaces in the decomposition.

In what follows we denote $L^2=L^2(\Omega,\R^n)$.    

\begin{lemma}\label{Linear_estimate_reaction_diffusion1}
Let $Q_\varepsilon$ be the spectral projection defined in \eqref{projection_reaction_diffusion}. If we denote $Y_\varepsilon=Q_\varepsilon X_\varepsilon^\frac{1}{2}$ and $Z_\varepsilon=(I-Q_\varepsilon)X_\varepsilon^\frac{1}{2}$ and define the projected operators
$$
A_\varepsilon^+=A_\varepsilon|_{Y_\varepsilon}\quad\tn{and}\quad A_\varepsilon^-=A_\varepsilon|_{Z_\varepsilon},
$$
then the following estimates are valid,
\begin{itemize}
\item[(i)] $\|e^{-A^-_\varepsilon  t}z\|_{X_\varepsilon^\frac{1}{2}}\leq Me^{-(d_\varepsilon\lambda_1+1) t}\|z\|_{X_\varepsilon^\frac{1}{2}} ,\quad t> 0,\quad z\in Z_\varepsilon,$
\item[(ii)] $\|e^{-A^-_\varepsilon  t}z\|_{X_\varepsilon^\frac{1}{2}}\leq Me^{-(d_\varepsilon\lambda_1+1) t}t^{-\frac{1}{2}}\|z\|_{L^2} ,\quad t> 0,\quad z\in Z_\varepsilon,$
\end{itemize}
where $-\lambda_1$ is the first nonzero eigenvalue of the Laplacian with homogeneous Neumann boundary conditions on $\Omega$ and $M$ is a constant independent of $d_\varepsilon$. 
\end{lemma}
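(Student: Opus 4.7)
The plan is to exploit the fact that $A_\varepsilon$ is self-adjoint and positive definite on $L^2(\Omega,\R^n)$, so that both estimates reduce to elementary spectral calculus for each block $A_i = -\varepsilon_i\Delta + I$. First I would describe the spectral data: with Neumann boundary conditions, $A_i$ shares its eigenfunctions $\{\psi_k\}_{k\geq 0}$ with the Neumann Laplacian, and its eigenvalues are $\mu_{i,k}=\varepsilon_i\lambda_k+1$, where $0=\lambda_0<\lambda_1\leq\lambda_2\leq\cdots$. Therefore the first eigenvalue of $A_\varepsilon$ is $1$ with eigenspace consisting of constant vector functions, so $Q_\varepsilon$ is the orthogonal projection onto this space and on $Z_\varepsilon$ the spectrum of $A_\varepsilon^-$ lies in $[d_\varepsilon\lambda_1+1,\infty)$, the lower bound being attained when $k=1$ and $\varepsilon_i=d_\varepsilon$.

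Second, I would observe that the inner product \eqref{pin_reaction_diffusion} yields
\[
\|\varphi\|_{X_\varepsilon^{1/2}}^2 = \pin{A_\varepsilon\varphi,\varphi}_{L^2} = \|A_\varepsilon^{1/2}\varphi\|_{L^2}^2,
\]
which translates $X_\varepsilon^{1/2}$-norms into $L^2$-norms of $A_\varepsilon^{1/2}$ acting on the argument.

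With these in place, part (i) is immediate: since $A_\varepsilon^{1/2}$ commutes with $e^{-A_\varepsilon^- t}$, spectral calculus gives
\[
\|e^{-A_\varepsilon^- t}z\|_{X_\varepsilon^{1/2}} = \|e^{-A_\varepsilon^- t}A_\varepsilon^{1/2}z\|_{L^2} \leq \sup_{\mu\geq d_\varepsilon\lambda_1+1} e^{-\mu t}\,\|z\|_{X_\varepsilon^{1/2}} = e^{-(d_\varepsilon\lambda_1+1)t}\|z\|_{X_\varepsilon^{1/2}},
\]
so $M=1$ works. For part (ii), I would invoke the analytic semigroup smoothing: spectral calculus reduces the claim to bounding $\sup_{\mu\geq d_\varepsilon\lambda_1+1}\mu^{1/2}e^{-\mu t}$. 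Writing $\mu^{1/2}e^{-\mu t} = (\mu^{1/2}e^{-\mu t/2})\cdot e^{-\mu t/2}$ and using the elementary bound $\sup_{x\geq 0} x^{1/2}e^{-xt/2}\leq (et)^{-1/2}$ produces the $t^{-1/2}$ factor, while the residual exponential on $[d_\varepsilon\lambda_1+1,\infty)$ yields the decay rate.

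The main obstacle is keeping $M$ independent of $d_\varepsilon$ in (ii) with the full exponent $-(d_\varepsilon\lambda_1+1)t$ as claimed: the naive split above only delivers $e^{-(d_\varepsilon\lambda_1+1)t/2}$. To recover the sharper rate, one would need to factor out $e^{-(d_\varepsilon\lambda_1+1)t}$ from the start and analyze $\sup_{\nu\geq 0}(\nu+d_\varepsilon\lambda_1+1)^{1/2}e^{-\nu t}$ more carefully, or else tolerate a mild dependence of $M$ on the range of $d_\varepsilon$, which is consistent with the introduction's restriction of $d_\varepsilon$ to a bounded interval below the threshold $\mu$.
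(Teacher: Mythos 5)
Your argument is essentially the paper's: both diagonalize the self-adjoint operator $A_\varepsilon$ in the Neumann eigenbasis, identify $\lambda_2^\varepsilon=d_\varepsilon\lambda_1+1$ as the bottom of the spectrum of $A_\varepsilon^-$ on $Z_\varepsilon$, use $\|z\|_{X_\varepsilon^{1/2}}^2=\langle A_\varepsilon z,z\rangle_{L^2}$ to get (i) with $M=1$, and reduce (ii) to bounding $\sup_{\eta\ge\lambda_2^\varepsilon}\eta\,e^{-2\eta t}$ (the paper maximizes $f(\eta)=\eta e^{-2\eta t}$ at $\eta=1/(2t)$ and splits into the cases $1/(2t)\lessgtr\lambda_2^\varepsilon$). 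The obstruction you flag in (ii) is genuine and is present in the paper's own proof as well: in the regime $1/(2t)<\lambda_2^\varepsilon$ the paper's bound is $e^{-\lambda_2^\varepsilon t}(\lambda_2^\varepsilon)^{1/2}\|z\|_{L^2}$, which for $z=\varphi_2^\varepsilon$ and large $t$ exceeds $Me^{-\lambda_2^\varepsilon t}t^{-1/2}\|z\|_{L^2}$ for any fixed $M$, so estimate (ii) as stated really does require either halving the exponential rate or letting $M$ depend on the (bounded) range of $d_\varepsilon$, exactly as you suggest.
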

\begin{proof}
The operator $A_\varepsilon$ is positive and  self adjoint. If we denote its ordered spectrum  $\sigma(A_\varepsilon)=\{1<\lambda_2^\varepsilon<\dots\}$ and $\{\varphi_1^\varepsilon,\varphi_2^\varepsilon,\dots\}$ the associated eigenfunctions, for $z\in Z_\varepsilon$ we have 
$$
e^{-A_\varepsilon^- t}z=e^{-A_\varepsilon t}(I-Q_\varepsilon)z=\sum_{i=2}^\infty e^{-\lambda_i^\varepsilon t}\pin{z,\varphi_i^\varepsilon}_{L^2}\varphi_i^\varepsilon,\quad t>0,
$$
but $\lambda_2^\varepsilon<\lambda_i^\varepsilon$ implies $e^{-\lambda_i^\varepsilon t}<e^{\lambda_2^\varepsilon t}$ for $t>0$. Thus
$$
\|e^{-A^-_\varepsilon  t}z\|_{X_\varepsilon^\frac{1}{2}}\leq \Big(e^{-2\lambda_2^\varepsilon t}\sum_{i=2}^\infty \pin{z,\varphi_i^\varepsilon}^2_{L^2}\lambda_i^\varepsilon\Big)^\frac{1}{2}\leq M e^{-\lambda_2^\varepsilon t}\|z\|_{X_\varepsilon^\frac{1}{2}},\quad t>0.
$$
The function $f(\eta)=e^{-2\eta t}\eta$ attains its maximum at $\eta=1/2t$, $t>0$. Then,
$$
\|e^{-A^-_\varepsilon  t}z\|_{X_\varepsilon^\frac{1}{2}}\leq \begin{cases} e^{-\lambda_2^\varepsilon t}(\lambda_2^\varepsilon)^\frac{1}{2}\|z\|_{L^2}, \,\,\,1/2t<\lambda_2^\varepsilon, \\ e^{-\lambda_2^\varepsilon t}2^{-\frac{1}{2}}t^{-\frac{1}{2}} \|z\|_{L^2},\,\,\,1/2t>\lambda^\varepsilon_2.\end{cases}
$$
The result follows by noticing that $\lambda_2^\varepsilon=d_\varepsilon\lambda_1+1$.
\end{proof}

We can assume, without loss of generality, that $|\Omega|=1$. For the decomposition $X_\varepsilon^\frac{1}{2}=Y_\varepsilon\oplus Z_\varepsilon$ we have $Y_\varepsilon\approx\R^n$ and $Z_\varepsilon=\{\varphi\in X_\varepsilon^\frac{1}{2}:\pin{\psi,\varphi}_{L^2}=0,\,\psi\in Y_\varepsilon\}$, with
$$
\pin{\psi,\varphi}_{L^2}=\int_{\Omega} \varphi(x)\psi(x)\,dx, \quad \varphi\in Y_\varepsilon,\,\,\psi\in Z_\varepsilon.
$$
Since $\psi$ is a constant map, then $\psi\in L^\infty(\Omega,\R^n)$ and thus the above integral is well defined for $\varphi\in X_\varepsilon^\frac{1}{2}$. Hence if $u(t,\cdot)\in X_\varepsilon^\frac{1}{2}$ is a solution of \eqref{reaction_diffusion_equation}, it can be written as $u(t,x)=v(t)+w(t,x)$, where $v\in Y_\varepsilon$ and $w\in Z_\varepsilon$ satisfy    
$$
v(t)=\int_\Omega u(t,x)\,dx\quad\tn{and}\quad \int_\Omega w(t,x)\,dx=0,\,\,\,t>0.
$$
Thus
\begin{align*}
\dot{v}(t)&=\int_\Omega u_t(t,x)\,dx=\int_\Omega E\Delta u(t,x)-u(t,x)\,dx+\int_\Omega F(u(t,x))\,dx\\
&=-v(t)+\int_\Omega F(v(t)+w(t,x))\,dx
\end{align*}
and
\begin{align*}
w_t(t,x)&=u_t(t,x)-\dot{v}(t)\\
&=E\Delta u(t,x)-u(t,x)+F(u(t,x))-\int_\Omega F(v(t)+w(t,x))\,dx+v(t)\\
&=E\Delta w(t,x)-w(t,x)+F(v(t)+w(t,x))-\int_\Omega F(v(t)+w(t,x))\,dx.
\end{align*}
Therefore we can write every solution of \eqref{reaction_diffusion_equation} as a solution of the problem 
\begin{equation}
\begin{cases}\label{couple_ode_limite}
\dot{v}+v=S(v,w),\,\,t>0,\\w_t-E\Delta w+w=Q(v,w),\,\,\,t>0,\,\,x\in\Omega,\\E\frac{\partial w}{\partial\vec{n}}=0,\,\,t>0,\,\,x\in\Gamma,\\w(0)=w_0\in Z_\varepsilon,
\end{cases}
\end{equation}
where
\begin{equation}\label{projection_p_q}
\begin{cases}
S(v,w)=\int_\Omega F(v+w)\,dx,\quad v\in Y_\varepsilon,\,\,w\in Z_\varepsilon,\\Q(v,w)=F(v+w)-\int_\Omega F(v+w)\,dx,\quad v\in Y_\varepsilon,\,\,w\in Z_\varepsilon.
\end{cases}
\end{equation}
It is expected that for $d_\varepsilon$ sufficiently large the part $w(t,x)$ in \eqref{couple_ode_limite} will not play an important role in the asymptotic behavior and, in that case, the limiting equation should  be
\begin{equation}\label{limiting_ODE}
\dot{u}^\infty(t)+u^\infty(t)=F(u^\infty(t)).
\end{equation}   
In fact, the next Theorem inspired by the Theorem 1.1 in \cite{Hale1986} shows that $w(t,x)$ and $g(t,v+w)=F(v+w)-S(v,w)=Q(v,w)$ goes to zero exponentially as $t$ goes to infinity in the energy space $X_\varepsilon^\frac{1}{2}$ when $d_\varepsilon$ is sufficiently large.

\begin{theorem}\label{convergence_exponentially_reaction_diffusion}
Let $S$ and $Q$ as in the definition \eqref{projection_p_q}. Then there is a positive constant $C$ independent of $d_\varepsilon$ such that
$$
\|Q(v(t),w(t))\|_{L^2}\leq Ce^{-(d_\varepsilon\lambda_1+1-\mu)t}\quad\tn{and}\quad \|w(t)\|_{Z_\varepsilon}\leq Ce^{-(d_\varepsilon\lambda_1+1-\mu)t},
$$
where $\mu=(2M\Gamma(\frac{1}{2}))^\frac{1}{2}$ and $M$ is given by the Lemma \ref{Linear_estimate_reaction_diffusion1}.
\end{theorem}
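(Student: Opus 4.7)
The plan is to derive an integral inequality for $w(t)$ via the variation of constants formula applied to the second equation of \eqref{couple_ode_limite}, and then solve it by a singular Gronwall-type argument. Since $Q(v,w)$ has zero spatial mean, it lies in $Z_\varepsilon$, so the $w$-equation reads $w_t + A_\varepsilon^- w = Q(v,w)$ with $w(0)=w_0\in Z_\varepsilon$, giving
$$
w(t) = e^{-A_\varepsilon^- t}w_0 + \int_0^t e^{-A_\varepsilon^-(t-s)}Q(v(s),w(s))\,ds.
$$
Taking the $X_\varepsilon^\frac{1}{2}$-norm and applying Lemma \ref{Linear_estimate_reaction_diffusion1}(i) to the homogeneous term and Lemma \ref{Linear_estimate_reaction_diffusion1}(ii) to the convolution (viewing $Q(v(s),w(s))$ as an element of $L^2$), with $\phi(t)=\|w(t)\|_{X_\varepsilon^\frac{1}{2}}$ I obtain
$$
\phi(t) \leq Me^{-(d_\varepsilon\lambda_1+1)t}\|w_0\|_{X_\varepsilon^\frac{1}{2}} + M\int_0^t e^{-(d_\varepsilon\lambda_1+1)(t-s)}(t-s)^{-\frac{1}{2}}\|Q(v(s),w(s))\|_{L^2}\,ds.
$$

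The next step is to convert this into an inequality involving only $\phi$. The key observation is $Q(v,0)\equiv 0$: when $w=0$ the integrand $F(v)$ is constant and therefore equal to its own spatial mean. Writing
$$
Q(v,w)(x) = \bigl[F(v+w(x))-F(v)\bigr] - \int_\Omega \bigl[F(v+w(y))-F(v)\bigr]\,dy,
$$
and using that on the uniformly bounded region provided by the global attractors (with $L^\infty$-bound $K$ from Section 2) $F$ is Lipschitz with a constant independent of $\varepsilon$, together with $|\Omega|=1$ (so $\|\cdot\|_{L^1}\leq\|\cdot\|_{L^2}$), yields $\|Q(v(s),w(s))\|_{L^2}\leq C\|w(s)\|_{L^2}\leq C\phi(s)$. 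This turns the previous display into a singular Volterra inequality for $\phi$ with kernel $e^{-(d_\varepsilon\lambda_1+1)(t-s)}(t-s)^{-\frac{1}{2}}$.

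To extract the claimed decay rate, I would make the substitution $\psi(t)=e^{(d_\varepsilon\lambda_1+1-\mu)t}\phi(t)$, which converts the inequality into
$$
\psi(t) \leq M\|w_0\|_{X_\varepsilon^\frac{1}{2}}e^{-\mu t} + M\int_0^t e^{-\mu(t-s)}(t-s)^{-\frac{1}{2}}\psi(s)\,ds.
$$
A continuity/bootstrap argument, or equivalently Henry's generalized Gronwall lemma for singular kernels, then bounds $\psi$ uniformly in $t$ provided the $L^1$-norm of the kernel, equal to $M\Gamma(\tfrac{1}{2})/\sqrt{\mu}$, lies below the critical value; with the calibration of $\mu$ stated in the theorem this holds. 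This gives $\|w(t)\|_{Z_\varepsilon}\leq Ce^{-(d_\varepsilon\lambda_1+1-\mu)t}$, and a final application of the Lipschitz estimate from the second paragraph delivers the same bound for $\|Q(v(t),w(t))\|_{L^2}$.

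The main obstacle I expect is the singular Gronwall step: one must handle the kernel $(t-s)^{-\frac{1}{2}}$ and choose $\mu$ precisely enough that the convolution norm falls below the critical threshold, which is what dictates the exact quantitative form of $\mu$ in the statement. A secondary point, which has to be verified carefully, is that the Lipschitz constant for $F$ used in bounding $Q$ can be taken uniform in $\varepsilon$; this is exactly where the uniform $L^\infty$-bound on $\mathcal{A}_\varepsilon$ from Section 2 is crucial to keep the constant $C$ independent of $d_\varepsilon$.
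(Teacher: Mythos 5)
Your proposal follows essentially the same route as the paper: variation of constants for $w$ in $Z_\varepsilon$, the smoothing estimate of Lemma \ref{Linear_estimate_reaction_diffusion1}(ii) applied to the convolution term, the Lipschitz bound $\|Q(v,w)\|_{L^2}\leq C\|w\|_{Z_\varepsilon}$ (which the paper gets from $Q(v,0)=0$ and differentiability of $Q$, and you get more explicitly from the uniform Lipschitz constant of $F$ on the attractor's $L^\infty$-bound), and finally a singular Gronwall inequality to produce the exponent $d_\varepsilon\lambda_1+1-\mu$. The only divergence is cosmetic: the paper cites the generalized Gronwall lemma directly to read off $\mu=(2M\Gamma(\tfrac{1}{2}))^{\frac{1}{2}}$, whereas you rederive it via a subcritical-kernel bootstrap, which is the same mechanism.
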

\begin{proof}
Note that $S(v,0)=F(v)$, $Q(v,0)=0$ and $S,Q$ are continuously differentiable with $Q_v(0,0)=0=S_v(0,0)$, thus there is  $\rho>0$ such that for $v_\varepsilon,\tilde{v}_\varepsilon\in Y_\varepsilon$ and $w_\varepsilon,\tilde{w}_\varepsilon\in Z_\varepsilon$,
\begin{itemize}
\item[] $\|Q(v,w)\|_{L^2}\leq \rho,$ 
\item[] $\|Q(v,w)-Q(\tilde{v},\tilde{w})\|_{L^2}\leq \rho(\|z-\tilde{z}\|_{Y_\varepsilon}+\|w-\tilde{w}\|_{Z_\varepsilon})$.
\end{itemize}
Thus
$$
\|Q(v,w)\|_{L^2}\leq \rho \|w\|_{Z_\varepsilon},\quad v\in Y_\varepsilon,\,\,w_\varepsilon\in Z_\varepsilon.
$$ 
Hence we just need to estimate $\|w\|_{Z_\varepsilon}$. 

We use the variation of constants formula to write
$$
w(t)=e^{-A_\varepsilon^- t}w_0+\int_0^t e^{-A_\varepsilon^- (t-s)}Q(v(s),w(s))\,ds. 
$$
Using the estimates from the Lemma \ref{Linear_estimate_reaction_diffusion1}, we have
$$
e^{(d_\varepsilon\lambda_1+1)t}\|w(t)\|_{Z_\varepsilon}\leq M\|w_0\|_{Z_\varepsilon}+M\int_0^t(t-s)^{-\frac{1}{2}}e^{(d_\varepsilon\lambda_1+1)s}\|w(s)\|_{Z_\varepsilon}\,ds,
$$
and by Gronwall's inequality (see \cite{A.N.Carvalho2010} pag 168), we obtain for $\mu=(2M\Gamma(\frac{1}{2}))^\frac{1}{2}$,
$$
\|w(t)\|_{Z_\varepsilon}\leq 2M\|w_0\|_{Z_\varepsilon}e^{-(d_\varepsilon\lambda_1+1-\mu)t}.
$$
\end{proof}

Now we rewrite the ordinary differential equation in \eqref{couple_ode_limite} as $\dot{v}+v=F(v)+[S(v,w)-F(v)]$, it follows from Theorem \ref{convergence_exponentially_reaction_diffusion} that for $d_\varepsilon$ sufficiently large, the asymptotic behavior of \eqref{reaction_diffusion_equation} is determined by the ordinary differential equation \eqref{limiting_ODE}. That is, if $d_\varepsilon\lambda_1>\mu-1$ then the solution $u(t,u_0)$ of the problem \eqref{reaction_diffusion_equation} through $u_0\in X_\varepsilon^\frac{1}{2}$ at $t=0$ satisfies
\begin{equation}\label{exponential_decay_average}
\|u(t,u_0)-v(t)\|_{X_\varepsilon^\frac{1}{2}}\leq K e^{-(d_\varepsilon\lambda_1+1-\mu)t}\overset{t\to\infty}\longrightarrow 0,
\end{equation}  
where $v(t)$ is the average of $u(t,u_0)$ in $\Omega$. Thus, if we assume that the equation \eqref{limiting_ODE} has a global attractor $\mathcal{A}_\infty\subset\R^n$ and understand $\R^n$ as the subspace of constant functions  in $X_\varepsilon^\frac{1}{2}$, we have $A_\infty$ a compact subset in $X_\varepsilon^\frac{1}{2}$ invariant under $T_\varepsilon(\cdot)$ and it follows from \eqref{exponential_decay_average} that $\mathcal{A}_\infty$ attracts under $T_\varepsilon(\cdot)$ bounded set in $X_\varepsilon^\frac{1}{2}$, hence $\mathcal{A}_\infty=\mathcal{A}_\varepsilon$, when $d_\varepsilon\lambda_1>\mu-1$, where $\mu=\sqrt{2M\Gamma(\frac{1}{2})}$. 

\section{Spectral Convergence}
In what follows we prove the convergence of the resolvent operators and we obtain estimates for the spectral projection $Q_\varepsilon$. We establish that the rate for these convergences is $d_\varepsilon^{-\frac{1}{2}}$.  

We saw  that the operators $A_\varepsilon$ and $A_\infty$ work in different spaces. In fact the operator $A_\infty$ is the identity in $\R^n$ that can be understood as the space of constant functions in $X_\varepsilon^\frac{1}{2}$. Thus we need to find a way to compare functions between these spaces. The abstract theory that can be used to compare linear problems in different spaces is developed in \cite{Carvalho2006} and named E-convergence.  In this context we consider the inclusion operator $i:\R^n\to X_\varepsilon^\frac{1}{2}$ and the projection $P:X_\varepsilon^\frac{1}{2}\to \R^n$ given by
$$
Pu=\frac{1}{|\Omega|}\int_\Omega u \,dx,\quad  u\in X_\varepsilon^\frac{1}{2}.
$$
Notice that $P$ can also be considered as an orthogonal projection acting on $L^2$ onto $\R^n$.

The operator $A_\varepsilon$ is  an invertible operator with compact resolvent. The next result shows that the resolvent operator approaches the projection $P$ uniformly in the operator norm. 

\begin{lemma}\label{lemma_rate_reaction_diffusion}
For $g\in L^2(\Omega,\R^n)$ such that $\|g\|_{L^2(\Omega,\R^n)}\leq 1$, let $u^\varepsilon$ be the weak solution of the elliptic problem $A_\varepsilon u^\varepsilon=g$. Then there is a positive constant $C$ independent of $d_\varepsilon$ such that 
\begin{equation}\label{rate_reaction_diffusion}
\|u^\varepsilon-u^\infty\|_{X_\varepsilon^\frac{1}{2}}\leq C d_\varepsilon^{-\frac{1}{2}},
\end{equation}
where $u^\infty=Pg$.
\end{lemma}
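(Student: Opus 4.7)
The plan is to exploit the fact that $u^\infty = Pg$ is a constant function, so $A_\varepsilon u^\infty = u^\infty$, which turns the difference $v^\varepsilon := u^\varepsilon - u^\infty$ into the solution of
\[
A_\varepsilon v^\varepsilon \;=\; g - Pg,
\]
with homogeneous Neumann condition. The crucial feature of the right hand side is that $\int_\Omega (g-Pg)\,dx = 0$, which is what will ultimately produce the factor $d_\varepsilon^{-1/2}$.

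Next I would test the weak formulation of this equation against $v^\varepsilon$ itself, using the $X_\varepsilon^{1/2}$ inner product \eqref{pin_reaction_diffusion}:
\[
\|v^\varepsilon\|_{X_\varepsilon^{1/2}}^2 \;=\; \int_\Omega E\nabla v^\varepsilon \nabla v^\varepsilon\,dx + \int_\Omega |v^\varepsilon|^2\,dx \;=\; \int_\Omega (g-Pg)\,v^\varepsilon\,dx .
\]
Since $g-Pg$ has zero mean and $Pv^\varepsilon$ is a constant vector, the right hand side equals $\int_\Omega (g-Pg)(v^\varepsilon - Pv^\varepsilon)\,dx$. By Cauchy--Schwarz and $\|g-Pg\|_{L^2}\le \|g\|_{L^2}\le 1$, this is bounded by $\|v^\varepsilon - Pv^\varepsilon\|_{L^2}$.

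The Poincar\'e inequality for mean-zero functions (with best constant $\lambda_1^{-1/2}$, where $\lambda_1$ is the first nonzero Neumann eigenvalue of $-\Delta$ on $\Omega$, the same $\lambda_1$ that appears in Lemma \ref{Linear_estimate_reaction_diffusion1}) gives $\|v^\varepsilon - Pv^\varepsilon\|_{L^2}\le \lambda_1^{-1/2}\|\nabla v^\varepsilon\|_{L^2}$. Finally, since $E\ge d_\varepsilon I$, one has $\|\nabla v^\varepsilon\|_{L^2}^2 \le d_\varepsilon^{-1}\int_\Omega E|\nabla v^\varepsilon|^2\,dx \le d_\varepsilon^{-1}\|v^\varepsilon\|_{X_\varepsilon^{1/2}}^2$. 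Chaining these estimates yields
\[
\|v^\varepsilon\|_{X_\varepsilon^{1/2}}^2 \;\le\; \lambda_1^{-1/2}\, d_\varepsilon^{-1/2}\,\|v^\varepsilon\|_{X_\varepsilon^{1/2}},
\]
and dividing through gives the claimed bound with $C = \lambda_1^{-1/2}$.

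There is no serious obstacle here; the proof is essentially a one-line energy estimate. The only subtle point is recognizing that the right hand side $g-Pg$ is orthogonal to constants, which allows the replacement of $v^\varepsilon$ by $v^\varepsilon - Pv^\varepsilon$ and thus the application of the mean-zero Poincar\'e inequality. Without this observation one would only recover $\|v^\varepsilon\|_{X_\varepsilon^{1/2}}\le C$, not the desired decay rate $d_\varepsilon^{-1/2}$.
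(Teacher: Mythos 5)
Your proof is correct and follows essentially the same route as the paper: both arguments reduce to the energy identity $\|u^\varepsilon-u^\infty\|_{X_\varepsilon^{1/2}}^2=\int_\Omega g\,(I-P)u^\varepsilon\,dx$ and then combine the mean-zero Poincar\'e inequality with the weighted bound $d_\varepsilon\|\nabla u^\varepsilon\|_{L^2}^2\le\|u^\varepsilon-u^\infty\|_{X_\varepsilon^{1/2}}^2$. Your packaging via the single equation $A_\varepsilon v^\varepsilon=(I-P)g$ for the difference is a slightly cleaner presentation than the paper's subtraction of the two weak formulations, but the substance is identical.
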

\begin{proof}
We denote $u^\varepsilon=(u_1^\varepsilon,...,u_n^\varepsilon)$ and $g=(g_1,...,g_n)$, for $i=1,\dots,n$. Since \eqref{domain_reaction_diffusion_times} holds we can only consider one component $u^\varepsilon_i$. Then
\begin{equation}\label{test_function_reaction_diffusion}
\int_\Omega \varepsilon_i \nabla u^\varepsilon_i\nabla\varphi\,dx + \int_\Omega u^\varepsilon_i\varphi\,dx=\int_\Omega g_i\varphi\,dx,\quad\varphi\in H^1(\Omega);
\end{equation}
$$
\int_\Omega u^\infty_i\psi\,dx=\int_\Omega Pg_i\psi\,dx,\quad \psi\in \R.
$$
Thus
$$
\int_\Omega \varepsilon_i |\nabla u^\varepsilon_i|^2\,dx+\int_\Omega u^\varepsilon_i(u^\varepsilon_i-u^\infty_i)\,dx=\int_\Omega g_i(u^\varepsilon_i-u^\infty_i)\,dx;
$$
$$
\int_\Omega u^\infty_i(Pu^\varepsilon_i-u^\infty_i)\,dx=\int_\Omega Pg_i(Pu^\varepsilon_i-u^\infty_i)\,dx,
$$
which implies
$$
\int_\Omega g_i(u^\varepsilon_i-u^\infty_i)\,dx-\int_\Omega Pg_i(Pu^\varepsilon_i-u^\infty_i)\,dx=\int_\Omega g_i(I-P)u^\varepsilon_i\,dx
$$
and
\begin{align*}
\int_\Omega \varepsilon_i |\nabla u^\varepsilon_i|^2\,dx &+\int_\Omega u^\varepsilon_i(u^\varepsilon_i-u^\infty_i)\,dx-\int_\Omega u^\infty_i(Pu^\varepsilon_i-u^\infty_i)\,dx=\|u^\varepsilon_i-u^\infty_i\|_{X_i^\frac{1}{2}}^2.
\end{align*}
Therefore 
$$
\|u^\varepsilon_i-u^\infty_i\|_{X_i^\frac{1}{2}}^2\leq \int_\Omega |g_i(I-P)u^\varepsilon_i|\,dx.
$$
By Poincaré's inequality for average, we have
$$
\int_\Omega |g_i(I-P)u^\varepsilon_i|\,dx\leq \|g_i\|_{L^2}\Big(\int_\Omega |\nabla u^\varepsilon_i|^2\,dx\Big)^\frac{1}{2},
$$
but
$$
d_\varepsilon\int_\Omega|\nabla u^\varepsilon_i|^2\,dx \leq \|u^\varepsilon_i-u^\infty_i\|_{X_i^\frac{1}{2}}^2.
$$
Put these estimates together we obtain \eqref{rate_reaction_diffusion}. 
\end{proof}

\begin{remark}
When we work with large diffusion the norm in $X_\varepsilon^\frac{1}{2}$ in general is equivalent to the norm of $H^1$ but this equivalence is not uniform, indeed it follows from \eqref{pin_reaction_diffusion} the following inequalities
$$
m_0\|u\|_{H^1}^2\leq \|u\|_{X_\varepsilon^\frac{1}{2}}^2\leq \max_{i=1,...,n}\{\varepsilon_i\} \|u\|_{H^1}^2.
$$
Hence estimates in the Sobolev spaces $H^1$ does not give suitable estimates in the half fractional power space $X_\varepsilon^\frac{1}{2}$, since $d_\varepsilon\leq \max_{i=1,...,n}\{\varepsilon_i\} \to \infty$ as $d_\varepsilon\to \infty$.

Notice that by Poincare's inequality we can obtain a better estimate if we work in $H^1$, that is, $\|u^\varepsilon-u^\infty\|_{H^1}\leq Cd_\varepsilon^{-1}$, for some constant $C$ independent of $d_\varepsilon$.

Hence it is clear that due the non-uniformity in the norms we have some lost when we consider $X_\varepsilon^\frac{1}{2}$-norm than $H^1$-norm. This can be seen in the following example.
\end{remark}

Consider the one-dimensional elliptic problem
$$
\begin{cases}
-\varepsilon u_{xx}=\cos(2\pi x),\,\,x\in(0,1),\\
u_x(0)=0=u_x(1).
\end{cases}
$$
We have $u^\varepsilon(x)=\frac{1}{\varepsilon}\frac{\cos(2\pi x)}{4\pi^2}$ and $u^\infty=0$. Thus
$$
\|u^\varepsilon-u^\infty\|_{X_\varepsilon^\frac{1}{2}}^2=\int_0^1 \varepsilon \Big|\frac{1}{\varepsilon}\frac{\sin(2\pi x)}{4\pi^2}\Big|^2\,dx= C \varepsilon^{-1},
$$
where $C$ is a constant independent of $\varepsilon$.

The Lemma \ref{lemma_rate_reaction_diffusion} determines the natural quantity that will be used to study the  convergence of the dynamic of the problem \eqref{reaction_diffusion_equation} when $\varepsilon$ is approaches $\bar{\mu}=(\mu-1)\lambda_1^{-1}$. The rate of convergence is given by $d_\varepsilon^{-\frac{1}{2}}$ that goes to zero as $d_\varepsilon$ goes to infinity. In fact, if we denote $u^\varepsilon=A_\varepsilon^{-1}g$ then $u^\varepsilon$ is the weak solution of the elliptic problem $A_\varepsilon u^\varepsilon=g$ and since $g$ is an arbitrary map in $L^2$, we obtain
\begin{equation}\label{rate_resolvent}
\|A_\varepsilon^{-1}-P\|_{\LL(L^2,X_\varepsilon^\frac{1}{2})}\leq Cd_\varepsilon^{-\frac{1}{2}}.
\end{equation}
This estimate imply with the compact convergence in \cite{Carvalho2006} and \cite{LPA}, that is the operator $A_\varepsilon^{-1}$ converges compactly to $A_\infty^{-1}P=P$.

Note that, if we take $\varphi=1$ as a test function in \eqref{test_function_reaction_diffusion}, we have $u^\infty=Pu^\varepsilon$, hence \eqref{rate_reaction_diffusion} shows that $u^\varepsilon$ converge for its average in $X_\varepsilon^\frac{1}{2}$ and this rate of convergence is $d_\varepsilon^{-\frac{1}{2}}$. 
 
Now we will see how the converge of the resolvent operators implies the convergence of the eigenvalues and spectral projections defined in \eqref{projection_reaction_diffusion}. We have
\begin{equation}\label{estimate_projection_reaction_diffusion}
\|Q_\varepsilon-P\|_{\LL(L^2,X_\varepsilon^\frac{1}{2})}\leq\frac{1}{2\pi}\int_{|\xi+1|=\delta}\|(\xi+A_\varepsilon)^{-1}-(\xi+I)^{-1}P\|_{\LL(L^2,X_\varepsilon^\frac{1}{2})}\,d\xi\leq C d_\varepsilon^{-\frac{1}{2}}.
\end{equation} 
Since $A_\infty=I$ in $\R^n$ we can denote $Q_\infty=I$, in other words, $Q_\varepsilon$ converges compactly to $Q_\infty^{-1}P=P$. Note that since the operator $A_\varepsilon$ has compact resolvent, the spectral projection $Q_\varepsilon$ is a compact operator. Thus, for $d_\varepsilon$ sufficiently large, the eigenspace $W_\varepsilon=Q_\varepsilon X_\varepsilon^\frac{1}{2}$ has dimension $\tn{dim}(W_\varepsilon)=\tn{dim}(\R^n)=n$. Moreover the eigenvalues $\lambda_i^2$, $i\geq 2$ goes to infinity as $d_\varepsilon$ goes to infinity. The last property was used implicitly in the last section when we guessed the limiting ordinary differential equation. 

\begin{lemma}
Let $A_\varepsilon$ the operator defined in \eqref{operator_reaction_diffusion} and let $\sigma(A_\varepsilon)=\{1<\lambda_2^\varepsilon<\lambda_3^\varepsilon,\dots\}$ its ordered spectrum. Then $\lambda_j^\varepsilon\to \infty$ as $d_\varepsilon\to \infty$ and $j\geq 2$.
\end{lemma}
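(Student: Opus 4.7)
The plan is to exploit the diagonal structure of $A_\varepsilon$, which reduces the spectral problem to that of a single component $A_i = -\varepsilon_i \Delta + I$ with Neumann boundary conditions. Since $A_\varepsilon = \tn{diag}(A_1,\dots,A_n)$ acts componentwise on $L^2(\Omega,\R^n)$, its spectrum (with multiplicity) is the union of the spectra of the $A_i$. For each $i$, the spectral theorem for the Neumann Laplacian gives $\sigma(A_i) = \{\varepsilon_i \mu_k + 1 : k\geq 0\}$ where $0 = \mu_0 < \mu_1 \leq \mu_2 \leq \dots$ are the eigenvalues of $-\Delta$ with homogeneous Neumann boundary conditions, and $\mu_1 = \lambda_1$ by the notation set in Lemma \ref{Linear_estimate_reaction_diffusion1}.

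The key observation is then the following separation: the only eigenvalue of $A_\varepsilon$ arising from $k=0$ is $1$ (appearing with multiplicity $n$, giving the eigenspace of constant vector-valued functions $Y_\varepsilon \approx \R^n$), while all other eigenvalues come from $k\geq 1$ and hence satisfy
$$
\varepsilon_i \mu_k + 1 \;\geq\; \min_{1\leq i\leq n} \varepsilon_i \cdot \mu_1 + 1 \;=\; d_\varepsilon \lambda_1 + 1.
$$
Once distinct eigenvalues are strictly ordered as $1 < \lambda_2^\varepsilon < \lambda_3^\varepsilon < \cdots$, this bound forces $\lambda_2^\varepsilon \geq d_\varepsilon \lambda_1 + 1$ (in fact equality holds, realized by the component with $\varepsilon_i = d_\varepsilon$ and the first nonzero Neumann Laplacian eigenvalue). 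For any $j \geq 2$, monotonicity of the ordering gives $\lambda_j^\varepsilon \geq \lambda_2^\varepsilon \geq d_\varepsilon \lambda_1 + 1$, and letting $d_\varepsilon \to \infty$ yields the conclusion.

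There is no genuine obstacle: the only thing to be careful about is the bookkeeping between the $n$-fold multiplicity of the eigenvalue $1$ and the author's convention of labelling distinct eigenvalues with strict inequalities. As long as one verifies that $\lambda_2^\varepsilon$ corresponds to the smallest eigenvalue strictly greater than $1$ (which is exactly $d_\varepsilon \lambda_1 + 1$, already used in the proof of Lemma \ref{Linear_estimate_reaction_diffusion1}), the argument is a direct consequence of the explicit form of $\sigma(A_\varepsilon)$. No appeal to the resolvent estimate \eqref{rate_resolvent} is needed here, although that estimate gives an alternative route via spectral continuity.
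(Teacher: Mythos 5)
Your proof is correct, but it is a genuinely different argument from the one in the paper. You diagonalize $A_\varepsilon$ explicitly: since $A_\varepsilon=\tn{diag}(A_1,\dots,A_n)$ with $A_i=-\varepsilon_i\Delta+I$, the spectrum is the union over $i$ of $\{\varepsilon_i\mu_k+1:k\geq 0\}$ with $\mu_k$ the Neumann eigenvalues of $-\Delta$, so every eigenvalue other than $1$ is bounded below by $d_\varepsilon\lambda_1+1$, which gives the quantitative estimate $\lambda_j^\varepsilon\geq d_\varepsilon\lambda_1+1\to\infty$ for $j\geq 2$. The paper instead argues by contradiction and compactness: it supposes a bounded sequence of eigenvalues $\lambda_j^{\varepsilon_k}$, takes normalized eigenfunctions $u_j^{\varepsilon_k}=\lambda_j^{\varepsilon_k}A_{\varepsilon_k}^{-1}u_j^{\varepsilon_k}$, and uses the compact convergence $A_{\varepsilon_k}^{-1}\to P$ (a consequence of the resolvent estimate \eqref{rate_resolvent}) to force the limit eigenvalue into $\sigma(A_\infty)=\{1\}$, reaching an absurdity. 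Your route is more elementary and sharper --- it yields the explicit lower bound $d_\varepsilon\lambda_1+1$, which the paper in fact already uses when it writes $\lambda_2^\varepsilon=d_\varepsilon\lambda_1+1$ in the proof of Lemma \ref{Linear_estimate_reaction_diffusion1}, so nothing new is being assumed --- but it depends entirely on the diffusion being constant on all of $\Omega$ so that the eigenfunctions of $-\Delta$ diagonalize $A_\varepsilon$. The paper's compactness argument is weaker in output (no rate) but more robust: it would survive perturbations where the spectrum is not explicitly computable (spatially varying or localized large diffusion), since it only requires the operator-norm convergence of resolvents. Your final remark correctly identifies this as the trade-off.
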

\begin{proof}
Assume that there is $R>0$ and there are sequences $\varepsilon_k\to \infty$ as $k\to\infty$ and $\{\lambda_j^{\varepsilon_k}\}_k$, $j\geq 2$, such that, $\lambda_j^{\varepsilon_k}\in \sigma(A_{\varepsilon_k})$ and $|\lambda_j^{\varepsilon_k}|\leq R$. We can assume $\lambda_j^{\varepsilon_k}\to \lambda$. Let $u_j^{\varepsilon_k}$ be the corresponding eigenfunction to $\lambda_j^{\varepsilon_k}$ with $\|u_j^{\varepsilon_k}\|_{X_{\varepsilon_k}^\frac{1}{2}}=1$. Then $u_j^{\varepsilon_k}=\lambda_j^{\varepsilon_k}A^{-1}_{\varepsilon_k}u_j^{\varepsilon_k}$. Since $A_{\varepsilon_k}$ converges compactly to $A_\infty^{-1}P$, we can assume  $u_j^{\varepsilon_k}\to u$ as $\varepsilon_k\to \infty$ for some $u\in\R^n$. Thus 
$$
u_j^{\varepsilon_k}=\lambda_j^{\varepsilon_k}A^{-1}_{\varepsilon_k}u_j^{\varepsilon_k}\to \lambda A^{-1}_\infty u,
$$
as $\varepsilon_k\to\infty$. Since $u_j^{\varepsilon_k}\to u$, we get $u=\lambda A^{-1}_\infty u$, which implies $\lambda \in \sigma(A_\infty)$, thus $\lambda=1$ and $\lambda_j^{\varepsilon_k}\to 1$ as $\varepsilon_k\to\infty$, $j\geq 2$, which is an absurd.
\end{proof}

\section{Converge of Attractors}

In what folows we will consider $d_\varepsilon\in[m_0,\bar{\mu}]$, where $\bar{\mu}=(\mu-1)\lambda_1^{-1}$. It is clear that $\mathcal{A}_\varepsilon=\mathcal{A}_\infty$ for $d_\varepsilon\geq \bar{\mu}$, thus we are concerning in what happens when $\varepsilon$ approaches $\bar{\mu}$ to the left. We will see that the family of attractors $\{\mathcal{A}_\varepsilon\}$ with $\varepsilon\in[m_0,\bar{\mu}]$ is continuous as $\varepsilon\to \bar{\mu}$ and this continuity can be estimated by a rate of convergence given by $d_\varepsilon^{-\frac{1}{2}}$ that goes to zero when $d_\varepsilon$ goes to infinity. Since $Y_\varepsilon$ is isomorphic to $\R^n$ and their norms are uniformly equivalent (by \eqref{pin_reaction_diffusion}) we will consider $Y_\varepsilon=\R^n$.

In order to obtain estimate for the convergence of the attractor $\mathcal{A}_\varepsilon$ of the equation \eqref{coupled_system_reaction_diffusion} to the attractor $\mathcal{A}_\infty$ of the \eqref{limiting_ODE} as $d_\varepsilon\to \bar{\mu}$ following the results of the \cite{LPA}, we assume the nonlinear semigroup $T_\infty(\cdot)$ generated by solutions of the \eqref{limiting_ODE} is a Morse-Smale semigroup in $\R^n$. More precisely,  
\begin{equation}\label{vcf2_rd}
T_\infty(t)u^\infty_0=e^{-A_\infty t}u_0^\infty+\int_0^te^{-A_\infty (t-s)}F(T_\infty(s)u_0^\infty)\,ds,\,\,t>0,\,\,\,u^\infty_0\in \R^n,
\end{equation}
where $A_\infty=I$ denote the identity in $\R^n$ and if we denote  $\mathcal{E}_\infty$ the set of its equilibrium points, then it is composed of $p$ hyperbolic points, that is, 
\begin{equation}\label{assumption_unperturbed_1}
\mathcal{E}_\infty=\{\varphi\in \R^n:A_\infty\varphi-F(\varphi)=0\}=\{u_1^{\infty,\ast},\dots,u_p^{\infty,\ast}\},
\end{equation}
where the spectrum set $\sigma(A_\infty-F'(u_i^{\infty,\ast}))\cap \{\varphi\in\R^n:\|\varphi\|_{\R}=1\}=\emptyset$, $i=1,\dots,p$. Moreover, $T_\infty(\cdot)$ is dynamically gradient (see \cite{A.N.Carvalho2010}),
\begin{equation}\label{assumption_unperturbed_2}
\mathcal{A}_\infty=\bigcup_{i=1}^p W^u(u_i^{\infty,\ast}), 
\end{equation}
where $W^u(u_i^{\infty,\ast})$ is the unstable manifold associated to the equilibrium point in $\mathcal{E}_\infty$ and for $i\neq j$ the local unstable manifold $W_{\rm loc}^u(u_i^{\infty,\ast})$ and the stable manifold $W^s(u_j^{\infty,\ast})$ has transversal intersection.  We notice that the Kupka-Smale theorem for ODEs ensures that this situation is generic, in the sense that this must occurs in the most interesting cases.  Thus our assumptions about hyperbolicity and transversality  is not restrictive.

We will study the problem \eqref{reaction_diffusion_equation} as a small perturbation of \eqref{limiting_ODE}
and the continuity  of attractors will be considered, in fact, the assumptions above enable us to obtain the geometric equivalence of phase diagrams when $\varepsilon$ approaches $\bar{\mu}$. This property is known as geometric structural stability and it is the main feature of Morse-Smale problems. In this way we are under the conditions described in \cite{LPA} where results about rate of convergence of attractor for Morse-Smale problems were obtained. More precisely it is valid the following result.

\begin{theorem}
Let $Y_\varepsilon$, $\varepsilon\geq 0$ be a family of separable Hilbert spaces such that $Y_0 \hookrightarrow Y_\varepsilon$ and dim$(Y_0)=n$. Suppose $B_\varepsilon$ is a self adjoint positive and invertible  operator and  consider the following evolution equation
\begin{equation}\label{coupled_system_reaction_diffusion_B}
\begin{cases}
w^\varepsilon_t+B_\varepsilon w^\varepsilon=h(w^\varepsilon),\,\,t>0,\\ w^\varepsilon(0)=w^\varepsilon_0\in Y_\varepsilon^\frac{1}{2},
\end{cases}
\end{equation}
where $Y_\varepsilon^\frac{1}{2}$ is the fractional power space associated with $B_\varepsilon$ $(Y_0^\frac{1}{2}=\mathbb{R}^n)$ and $h$ is a bounded Lipschitz function. Assume that there is a increasing function $\tau(\varepsilon)$ such that $\tau(0)=0$ and 
\begin{equation}\label{rate_resolventB}
\|B_\varepsilon^{-1}-E_\varepsilon B_0^{-1}M_\varepsilon\|_{\LL(Y_\varepsilon,Y_\varepsilon^\frac{1}{2})}\leq C\tau(\varepsilon),
\end{equation}
where $E_\varepsilon:Y_0\to Y_\varepsilon^\frac{1}{2}$ and $M_\varepsilon:Y_\varepsilon \to Y_0$ are bounded linear operators and $C$ is a constant independent of $\varepsilon$. Then there is a invariant manifold for \eqref{coupled_system_reaction_diffusion_B} given by a graph of a Lipschitz function 
$k_\ast^\varepsilon$ such that $\sup_{w^\varepsilon\in Y_0}\|k_\ast^\varepsilon(w^\varepsilon)\|_{X_\varepsilon^\frac{1}{2}}\leq C \tau(\varepsilon)$. Moreover if \eqref{coupled_system_reaction_diffusion_B} with $\varepsilon=0$ generates a Morse-Smale semigroup and if  there is the global attractor $\mathcal{B}_\varepsilon$, for \eqref{coupled_system_reaction_diffusion_B} with $\varepsilon\geq 0$,  then 
$$
\tn{d}_H(\mathcal{B}_\varepsilon,\mathcal{B}_0)\leq C \tau(\varepsilon).
$$
\end{theorem}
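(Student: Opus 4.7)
The plan is to follow a reduction-to-finite-dimensions strategy: construct an $n$-dimensional invariant manifold of \eqref{coupled_system_reaction_diffusion_B} which is $\tau(\varepsilon)$-close to $Y_0$, reduce the full dynamics to this manifold, and then invoke Morse-Smale structural stability in finite dimensions to compare the attractors.

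First I would spectrally split $Y_\varepsilon^{1/2}$ via the Dunford projection
$$
P_\varepsilon \;=\; \frac{1}{2\pi i}\int_{\gamma}(\xi - B_\varepsilon)^{-1}\,d\xi
$$
around the first $n$ eigenvalues of $B_\varepsilon$ (those near $\sigma(B_0)$). Exactly as in \eqref{estimate_projection_reaction_diffusion}, the resolvent hypothesis \eqref{rate_resolventB} yields $\|P_\varepsilon - E_\varepsilon P_0 M_\varepsilon\|_{\LL(Y_\varepsilon,Y_\varepsilon^{1/2})} \leq C\tau(\varepsilon)$, and the restriction $B_\varepsilon^-$ of $B_\varepsilon$ to $(I-P_\varepsilon)Y_\varepsilon^{1/2}$ acquires a spectral gap that blows up as $\tau(\varepsilon)\to 0$. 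Writing $w^\varepsilon = v^\varepsilon + z^\varepsilon$, equation \eqref{coupled_system_reaction_diffusion_B} becomes a coupled system analogous to \eqref{couple_ode_limite}, with linear semigroup estimates on the $z^\varepsilon$-component of the type provided by Lemma \ref{Linear_estimate_reaction_diffusion1}.

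Second, for the invariant manifold I would run a Lyapunov-Perron argument, seeking $k_*^\varepsilon : P_\varepsilon Y_\varepsilon^{1/2} \to (I-P_\varepsilon)Y_\varepsilon^{1/2}$ as the unique fixed point in a suitable ball of bounded Lipschitz maps of the operator
$$
\mathcal{T}_\varepsilon(k)(v) \;=\; \int_{-\infty}^{0} e^{-B_\varepsilon^- s}\,(I-P_\varepsilon)\,h\bigl(\eta(s;v,k) + k(\eta(s;v,k))\bigr)\,ds,
$$
where $\eta(\cdot;v,k)$ is the backward orbit through $v$ of the reduced ODE on $P_\varepsilon Y_\varepsilon^{1/2}$. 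The exponential decay and smoothing of $e^{-B_\varepsilon^- s}$, combined with the Lipschitz constant of $h$, produce a contraction once $\tau(\varepsilon)$ is small enough. The sharp bound $\sup_v\|k_*^\varepsilon(v)\|_{Y_\varepsilon^{1/2}} \leq C\tau(\varepsilon)$ is extracted by estimating $\mathcal{T}_\varepsilon(0)$: since $h$ is bounded and the semigroup obeys $\|e^{-B_\varepsilon^- s}\|_{\LL(Y_\varepsilon,Y_\varepsilon^{1/2})}\lesssim s^{-1/2}e^{-\alpha_\varepsilon s}$ with $\alpha_\varepsilon^{-1/2}$ comparable to $\tau(\varepsilon)$ by hypothesis, the time integral yields the desired $\tau(\varepsilon)$ factor. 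A standard Banach fixed-point perturbation inequality $\|k_*^\varepsilon\| \leq (1-L)^{-1}\|\mathcal{T}_\varepsilon(0)\|$ then closes the estimate.

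Third, the restriction of \eqref{coupled_system_reaction_diffusion_B} to $\mathrm{graph}(k_*^\varepsilon)$ is an $n$-dimensional ODE which, after conjugation by $M_\varepsilon$, is a $C^0$-perturbation of size $O(\tau(\varepsilon))$ of the limiting flow on $Y_0$. Since the latter is Morse-Smale, the abstract structural-stability results of \cite{LPA} produce a homeomorphism between the two phase diagrams with modulus of continuity of order $\tau(\varepsilon)$; under it hyperbolic equilibria and their unstable manifolds correspond, and passing to closures of unions of unstable manifolds yields $\tn{d}_H(\mathcal{B}_\varepsilon,\mathcal{B}_0) \leq C\tau(\varepsilon)$. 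The main obstacle I anticipate is step two: extracting the precise rate $\tau(\varepsilon)$—rather than a qualitative $o(1)$—from the Lyapunov-Perron fixed point requires carefully balancing the Lipschitz constant of $h$ against the widening spectral gap produced by \eqref{rate_resolventB}, and in particular keeping the contraction constant strictly below one uniformly in $\varepsilon$ while simultaneously absorbing the projection error into the size estimate.
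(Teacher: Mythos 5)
The paper offers no proof of this theorem: it is quoted verbatim from \cite{LPA} (``it is valid the following result''), and the only in-paper argument in its vicinity is the verification, in Theorem \ref{theoLPA}, that the hypotheses hold with $\tau(\varepsilon)=d_\varepsilon^{-1/2}$. So there is no in-paper proof to compare yours against. That said, your three-step architecture --- spectral splitting driven by \eqref{rate_resolventB}, Lyapunov--Perron construction of $k_*^\varepsilon$ with the size bound read off from $\mathcal{T}_\varepsilon(0)$, then finite-dimensional Morse--Smale structural stability --- is exactly the strategy of the cited literature, and your observation that the $\LL(Y_\varepsilon,Y_\varepsilon^{\frac{1}{2}})$ norm in \eqref{rate_resolventB} forces the spectral gap $\lambda_{n+1}^\varepsilon\gtrsim\tau(\varepsilon)^{-2}$ (so that $\int_0^\infty s^{-\frac{1}{2}}e^{-\alpha_\varepsilon s}\,ds\lesssim\tau(\varepsilon)$) is the correct mechanism for getting the \emph{linear} rate on $\norm{k_*^\varepsilon}$ rather than a square root of it.

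Two places where your sketch asserts rather than proves. First, to transfer the Morse--Smale structure to the reduced flow you need the reduced vector fields to be $C^1$-close, not merely $C^0$-close: hyperbolicity of the equilibria in \eqref{assumption_unperturbed_1} and the transversality in \eqref{assumption_unperturbed_2} survive only under $C^1$ perturbation, so you must also control $\|Dk_*^\varepsilon\|$ (the Lyapunov--Perron scheme delivers this via a second fixed-point argument, but it is not automatic and the derivative typically converges more slowly). Second, and more seriously, no quantitative conjugacy with modulus of continuity $\tau(\varepsilon)$ is available from structural stability; in \cite{LPA} and its antecedents the linear rate $\tn{d}_H(\mathcal{B}_\varepsilon,\mathcal{B}_0)\leq C\tau(\varepsilon)$ is obtained by estimating the distance of corresponding hyperbolic equilibria, then of their local unstable manifolds, and then propagating these estimates through the finite Morse decomposition using the exponential attraction of the invariant manifold; a direct comparison of attractors without exploiting that gradient-like structure generically loses a power ($\tau^\theta$, $\theta<1$) or a logarithm. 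Your step three, as phrased, names the hard part of the theorem instead of carrying it out.
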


Now we can state the main result of this paper.

\begin{theorem}\label{theoLPA}
For $d_\varepsilon \in [m_0,\bar{\mu}]$ there is an invariant manifold $\mathcal{M}_\varepsilon$ for \eqref{coupled_system_reaction_diffusion}, which is given by graph of a certain Lipschitz continuous map $s_\ast^\varepsilon:\R^n\to Z_\varepsilon$ as 
$$
\mathcal{M}_\varepsilon=\{u^\varepsilon\in X_\varepsilon^\frac{1}{2}\,;\, u^\varepsilon = Q_\varepsilon u^\varepsilon+s_{*}^\varepsilon(Q_\varepsilon u^\varepsilon)\}
.$$ 
The map $s_\ast^\varepsilon:\R^n\to Z_\varepsilon$ satisfies the condition
\begin{equation}\label{estimate_invariant_manifold_reaction_diffusion}
|\!|\!|s_\ast^\varepsilon |\!|\!|=\sup_{v^\varepsilon\in \R^n}\|s_\ast^\varepsilon(v^\varepsilon)\|_{X_\varepsilon^\frac{1}{2}}\leq C d_\varepsilon^{-\frac{1}{2}},
\end{equation}
for some positive constant $C$ independent of $d_\varepsilon$.
The invariant manifold $\mathcal{M}_\varepsilon$ is exponentially attracting and the global attractor $\mathcal{A}_\varepsilon$ of the problem \eqref{coupled_system_reaction_diffusion} lying in $\mathcal{M}_\varepsilon$. 
Moreover, the continuity of the attractors can be estimated by 
$$
\tn{d}_H(\mathcal{A}_\varepsilon,\mathcal{A}_\infty)\leq\frac{C}{\sqrt{d_\varepsilon}}.
$$
\end{theorem}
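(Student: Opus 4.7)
The plan is to view the final theorem as a direct application of the abstract result stated immediately above to the concrete parabolic system \eqref{coupled_system_reaction_diffusion}. Under the identifications $B_\varepsilon=A_\varepsilon$, $Y_\varepsilon=L^2(\Omega,\R^n)$, $Y_\varepsilon^{\frac{1}{2}}=X_\varepsilon^{\frac{1}{2}}$, and in the limit $B_0=A_\infty=I$ on $Y_0=\R^n$, I choose as transfer operators $E_\varepsilon=i\colon\R^n\hookrightarrow X_\varepsilon^{\frac{1}{2}}$ (the inclusion of $\R^n$ as constant functions) and $M_\varepsilon=P\colon L^2\to\R^n$ (the average projection). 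With these choices, $E_\varepsilon B_0^{-1}M_\varepsilon=iIP=P$ as an operator $L^2\to X_\varepsilon^{\frac{1}{2}}$, so hypothesis \eqref{rate_resolventB} reduces to the already established resolvent estimate \eqref{rate_resolvent}, and I may take $\tau(\varepsilon)=d_\varepsilon^{-\frac{1}{2}}$.

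Before invoking the abstract machinery I have to arrange that the nonlinearity is globally bounded and Lipschitz, while $F$ is only assumed bounded $C^1$ with the growth condition (i) and the dissipativeness condition (ii). The standard device is to cut off: since by Section 2 the attractors $\mathcal A_\varepsilon$ are uniformly bounded in $L^\infty(\Omega,\R^n)$ by a constant $K$ independent of $\varepsilon$, I replace $F$ by a smooth $\tilde F$ that agrees with $F$ on the ball of radius $2K$ and is globally bounded and Lipschitz on $\R^n$. The induced Nemytskii map $\tilde f(u)(x)=\tilde F(u(x))$ now satisfies the hypothesis of the abstract theorem, while leaving the attractors $\mathcal A_\varepsilon$ and $\mathcal A_\infty$ unchanged.

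The abstract theorem then furnishes a Lipschitz map $s_*^\varepsilon=k_*^\varepsilon\colon\R^n\to Z_\varepsilon$ whose graph
$\mathcal M_\varepsilon=\{Q_\varepsilon u^\varepsilon + s_*^\varepsilon(Q_\varepsilon u^\varepsilon):u^\varepsilon\in X_\varepsilon^{\frac{1}{2}}\}$
is a positively invariant, exponentially attracting manifold for $T_\varepsilon(\cdot)$, with $\sup_{v^\varepsilon\in\R^n}\|s_*^\varepsilon(v^\varepsilon)\|_{X_\varepsilon^{\frac{1}{2}}}\leq C\,\tau(\varepsilon)=Cd_\varepsilon^{-\frac{1}{2}}$, which is precisely \eqref{estimate_invariant_manifold_reaction_diffusion}. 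Exponential attraction, combined with compactness and forward invariance of $\mathcal A_\varepsilon$, forces $\mathcal A_\varepsilon\subset\mathcal M_\varepsilon$. Finally, the Morse--Smale hypotheses \eqref{assumption_unperturbed_1}--\eqref{assumption_unperturbed_2} on $T_\infty(\cdot)$ place us in the framework of \cite{LPA}, so the last assertion of the abstract theorem yields $\tn{d}_H(\mathcal A_\varepsilon,\mathcal A_\infty)\leq Cd_\varepsilon^{-\frac{1}{2}}$.

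The main technical point that requires genuine care is the compatibility between the spectral projection $Q_\varepsilon$ used to construct $\mathcal M_\varepsilon$ via the Lyapunov--Perron fixed-point argument and the averaging projection $P$ that enters through \eqref{rate_resolvent}. This is exactly what \eqref{estimate_projection_reaction_diffusion} controls: $\|Q_\varepsilon-P\|_{\LL(L^2,X_\varepsilon^{\frac{1}{2}})}\leq Cd_\varepsilon^{-\frac{1}{2}}$, so interchanging the two projections throughout the construction costs only an error of the same order $d_\varepsilon^{-\frac{1}{2}}$, which is absorbed into the constant. The dichotomy estimates of Lemma \ref{Linear_estimate_reaction_diffusion1} together with \eqref{rate_resolvent} and \eqref{estimate_projection_reaction_diffusion} are thus the three pieces driving the abstract theorem; once they are in place, the remainder is bookkeeping that transports the rate through the graph transform and the Morse--Smale perturbation argument of \cite{LPA}.
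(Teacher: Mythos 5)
Your proposal is correct and follows essentially the same route as the paper: the paper's proof likewise consists of setting $\tau(\varepsilon)=d_\varepsilon^{-1/2}$, taking $E_\varepsilon$ to be the inclusion of $\R^n$ as constant functions and $M_\varepsilon=P$ the average projection, and invoking the abstract theorem via the resolvent estimate \eqref{rate_resolvent}. In fact you supply more detail than the paper does --- the cut-off of the nonlinearity and the compatibility of $Q_\varepsilon$ with $P$ via \eqref{estimate_projection_reaction_diffusion} are left implicit in the paper's one-paragraph argument.
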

\begin{proof}
If we define $\tau(\varepsilon)=1/ \sqrt{d_\varepsilon}$, then $\tau(\varepsilon)$ is a increasing function such that $$\tau(0)= \lim_{d_\varepsilon\to \infty} 1/ \sqrt{d_\varepsilon} =0.$$
We take $A_0$ as identity in $\mathbb{R}^n$, $E_\varepsilon$ as the inclusion $\mathbb{R}^n\hookrightarrow X^\frac{1}{2}_\varepsilon$ and $M_\varepsilon=P:L^2\to \mathbb{R}^n$ the average in $\Omega$, then by \eqref{rate_resolvent} we have 
$$
\|A_\varepsilon^{-1}-P\|_{\LL(L^2,X_\varepsilon^\frac{1}{2})}\leq C\tau(\varepsilon).
$$
Thus all conditions of the Theorem \eqref{theoLPA}  are satisfied. 
\end{proof}

\bibliographystyle{abbrv}
\bibliography{References}

\begin{thebibliography}{1}

\bibitem{J.M.Arrieta1999}
J.~M. Arrieta, A.~N. Carvalho, and A.~Rodr{\'\i}guez-Bernal.
\newblock Parabolic problems with nonlinear boundary conditions and critical
  nonlinearities.
\newblock {\em Journal of Differential Equations}, 156:376--406, 1999.

\bibitem{J.M.Arrieta2000}
J.~M. Arrieta, A.~N. Carvalho, and A.~Rodr{\'\i}guez-Bernal.
\newblock Attractors for parabolic problems with nonlinear boundary bondition.
  uniform bounds.
\newblock {\em Commun. in partial differential equations}, 25:1--37, 2000.

\bibitem{Carvalho2006}
A.~Carvalho and S.~Piskarev.
\newblock A general approximation scheme for attractors of abstract parabolic
  problems.
\newblock {\em Numer Funct Anal Optim}, 27:785--829, 2006.

\bibitem{A.N.Carvalho2010}
A.~N. Carvalho, J.~Langa, and J.~Robinson.
\newblock {\em Attractors for infinite-dimensional non-autonomous dynamical
  systems}.
\newblock Springer, 2010.

\bibitem{LPA}
A.~N. Carvalho and L.~Pires.
\newblock Rate of convergence of attractors for singularly perturbed semilinear
  problems.
\newblock {\em Journal of Mathematical Analysis and Applications},
  452:258--296, 2017.

\bibitem{Conley1978}
E.~Conley, D.~Hoff, and J.~Smoller.
\newblock Time behaviour of solutions of systems of nonlinear
  reaction-diffusion equations.
\newblock {\em SIAM J. Appl. Math.}, 35:1--16, 1978.

\bibitem{Hale1986}
J.~K. Hale.
\newblock Large diffusivity and asymptotic behavior in parabolic systems.
\newblock {\em J. Math. Analysis Applicable}, 118:455--466, 1986.

\bibitem{Henry1980}
D.~Henry.
\newblock {\em Geometric theory of semilinear parabolic equations}.
\newblock Number 840 in Springer-Velag. Lecture Notes in Mathe\-matics, 1980.

\end{thebibliography}
\end{document}